\theoremstyle{plain}
\newtheorem{theorem}{Theorem}[section]
\newtheorem{lemma}[theorem]{Lemma}
\newtheorem{corollary}[theorem]{Corollary}
\newtheorem{proposition}[theorem]{Proposition}
\theoremstyle{definition}
\theoremstyle{remark}
\newcommand{\R}{\mathbb{R}}
\def\Ext{\operatorname{Ext}}
\def\PB{\operatorname{PB}}
\def\PO{\operatorname{PO}}
\newcommand{\lop}{\curvearrowright }
\begin{document}


\title{Bilinear forms and the $\Ext^2$-problem in Banach spaces}

\author{Jes\'us M. F. Castillo and Ricardo Garc\'ia}
\address{Departamento de Matem\'aticas, Universidad de Extremadura,
Avenida de Elvas, 06071-Badajoz, Spain}
\email{castillo@unex.es}
\email{rgarcia@unex.es}
\thanks{2010 Mathematics Subject Classification. 46M18, 46M05, 46G25 }

\thanks{This research has been supported by Project MTM2016-76958-C2-1-P and Project IB16056 de la Junta de Extremadura.}
\maketitle

\begin{abstract} Let $X$ be a Banach space and let $\kappa(X)$ denote the kernel of a quotient map $\ell_1(\Gamma)\to X$. We show that  $\Ext^2(X,X^*)=0$ if and only if bilinear forms on $\kappa(X)$ extend to $\ell_1(\Gamma)$. From that we obtain i) If $\kappa(X)$ is a $\mathcal L_1$-space then $\Ext^2(X,X^*)=0$; ii) If $X$ is separable, $\kappa(X)$ is not an $\mathcal L_1$ space and $\Ext^2(X,X^*)=0$ then $\kappa(X)$ has an unconditional basis. This provides new insight into a question of Palamodov in the category of Banach spaces.\end{abstract}

\section{Introduction}

The purpose of this paper is to establish a connection between two different areas in the theory of Banach spaces: homology and holomorphy.
Let us make a brief introduction to explain the nature of our results. Given two Banach spaces $X$ and $Y$ let $\mathfrak L(X, Y )$ denote the vector
space of linear continuous operators acting between them. If $\mathfrak L$ denotes the previous functor then its first derived functor $\Ext$ is the one that assigns to each couple $X, Y$ the vector space $\operatorname{Ext}(X, Y)$ of exact sequences $0 \to Y \to \diamondsuit \to X \to 0$  modulo equivalence (see Section 2 for all unexplained terms); its second derived functor will be called $\operatorname{Ext}^2$ and its operative description can be found in Section 3. \medskip

It turns out that several important Banach space problems and results adopt the form $\operatorname{Ext}(X, Y) = 0$. For instance,
\begin{itemize}
\item Sobczyk's theorem: $\operatorname{Ext}(X, c_0) = 0$ for every separable Banach space $X$.
\item Lindenstrauss's lifting principle:  $\operatorname{Ext}({L}_1(\mu), X^*) = 0$ for  every dual space $X^*$.
\item The Johnson-Zippin's theorem: $\operatorname{Ext}(H^*, \mathcal{L}_\infty) = 0$ for every subspace $H$ of $c_0$ and every $\mathcal L_\infty$-space.
\end{itemize}
In general, a basic Banach space question is whether $\operatorname{Ext}(X, Y) = 0$ for a given couple of Banach
spaces $X, Y $; and one of the fundamental results is that $\operatorname{Ext}(\ell_2, \ell_2) \neq 0$ (see \cite{elp,kaltpeck}). Similar questions for the second derived functor $\operatorname{Ext}^2$ have not been treated too often in the literature (see \cite{wenge}). Palamodov's Problem \cite[Problem 6]{P} states: Is $\operatorname{Ext}^2(\cdot,E)=0$ for any Fr\'echet space? A solution to Palamodov's problem in the category of Fr\'echet space was provided by Wengenroth in \cite{jwenge}. In the domain of Banach spaces the answer to the question is obviously not, as can be seen in Proposition \ref{ext2}. More interesting are questions of the type: Is $\Ext^2(X,Y)=0$ for a specific choice of $X,Y$ ?. In particular, the problem of whether $\Ext^2(\ell_2, \ell_2)=0$ is open. Partial results have been obtained in \cite{cck} and \cite{ccg}. We present here the following two results. The
first one establishes an unexpected connection between homology and the study of bilinear forms:

\begin{theorem} Let $X$ be a Banach space and let $Q: \ell_1(\Gamma)\to X$ be a quotient map.
$\operatorname{Ext}^2(X, X^*)=0$ if and only if every bilinear form defined on $\ker Q$ can be extended to a bilinear form on $\ell_1(\Gamma)$.
\end{theorem}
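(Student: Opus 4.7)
The plan is two-step: first to reduce $\Ext^2(X,X^*)=0$ to a first-order $\Ext$ vanishing via dimension shifting, and then to translate that vanishing into a bilinear extension statement by twice exploiting the injectivity of $\ell_\infty(\Gamma)$.

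\textbf{Dimension shifting.} Since $\ell_1(\Gamma)$ is projective in the Banach category, the exact sequence
$$0\to\kappa(X)\longrightarrow\ell_1(\Gamma)\xrightarrow{Q}X\to 0$$
begins a projective resolution of $X$. The standard dimension-shifting formula then yields
$$\Ext^2(X,X^*)\;\cong\;\Ext(\kappa(X),X^*).$$

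\textbf{Translation via duality.} I would dualize the resolution; by Hahn--Banach the dual map is surjective, so one obtains
$$0\to X^*\longrightarrow \ell_\infty(\Gamma)\xrightarrow{\pi}\kappa(X)^*\to 0.$$
Applying $\Hom(\kappa(X),-)$ and using that $\ell_\infty(\Gamma)$ is $1$-injective (so $\Ext(\kappa(X),\ell_\infty(\Gamma))=0$), the resulting long exact sequence collapses to
$$\Hom(\kappa(X),\ell_\infty(\Gamma))\xrightarrow{\pi_*}\Hom(\kappa(X),\kappa(X)^*)\longrightarrow \Ext(\kappa(X),X^*)\to 0.$$
Hence $\Ext^2(X,X^*)=0$ if and only if every operator $B_0:\kappa(X)\to\kappa(X)^*$ factors as $\pi\circ B_1'$ for some $B_1':\kappa(X)\to \ell_\infty(\Gamma)$.

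\textbf{From lifting to bilinear extension.} Identify continuous bilinear forms on a Banach space $Z$ with operators $Z\to Z^*$ via $b(x,y)=\langle B(x),y\rangle$. Given $B_0$ and a lift $B_1'$, I would use injectivity of $\ell_\infty(\Gamma)$ a second time to extend $B_1':\kappa(X)\to \ell_\infty(\Gamma)$ to $\widetilde B:\ell_1(\Gamma)\to \ell_\infty(\Gamma)=\ell_1(\Gamma)^*$; the bilinear form associated with $\widetilde B$ extends the one associated with $B_0$, because $\pi\circ \widetilde B\circ j$ is precisely the restriction of $\widetilde b$ to $\kappa(X)\times\kappa(X)$ (where $j:\kappa(X)\hookrightarrow\ell_1(\Gamma)$). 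Conversely, composing any bilinear extension of $b_0$ with $j$ yields the required lift $B_1'=\widetilde B\circ j$. The delicate step is to verify that operator-theoretic lifting through $\pi$ matches bilinear-form extension exactly; this becomes transparent after unwinding the commutative square $\pi\circ\widetilde B\circ j=B_0$, and both implications of the theorem then fall out of the exact sequence produced in the previous step.
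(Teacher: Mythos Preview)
Your proposal is correct and follows essentially the paper's route, merely recast in derived-functor language: your dimension shift plus long-exact-sequence argument is the abstract form of the paper's explicit Yoneda reduction $FG\equiv\Lambda_\infty\,\tau_b\,\Lambda_1$ obtained by push-out along $0\to\kappa(X)\to\ell_1(\Gamma)\to X\to 0$ and pull-back along its dual $0\to X^*\to\ell_\infty(\Gamma)\to\kappa(X)^*\to 0$. The only cosmetic difference is the last step, where the paper lifts $\ell_1(\Gamma)\to\kappa(X)^*$ through $j^*$ by projectivity of $\ell_1(\Gamma)$ while you extend $\kappa(X)\to\ell_\infty(\Gamma)$ along $j$ by injectivity of $\ell_\infty(\Gamma)$; either produces the same commutative square $j^*\circ T\circ j=\tau_b$ that characterizes bilinear extension.
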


The second result connects the $\Ext^2$ problem with the nature of subspaces of $\ell_1$. Precisely,

\begin{theorem} Let $X$ be a separable Banach space and let $q: \ell_1\to X$ be a quotient map.
\begin{enumerate}
\item If $\ker q$ is an $\mathcal{L}_1$-space then $\Ext^2(X, X^*)= 0$. 
\item If $\ker q $ has an unconditional basis and is not an $\mathcal{L}_1$-space then $\Ext^2(X, X^*) \neq 0$.
\end{enumerate}
\end{theorem}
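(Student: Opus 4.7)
The plan is to reduce both statements to computing a first-derived functor via a dimension-shifting argument and then to invoke the bilinear form criterion of Theorem~1. Applying $\Hom(\cdot,X^*)$ to the short exact sequence $0\to K\to \ell_1\to X\to 0$ (with $K=\ker q$) produces a long exact sequence; since $\ell_1$ is projective in the category of Banach spaces and bounded operators, $\Ext^n(\ell_1, X^*)=0$ for every $n\geq 1$, and the connecting homomorphism yields the dimension shift $\Ext^2(X,X^*)\cong \Ext^1(K,X^*)$. Via Theorem~1 both parts of the theorem thus become questions about the extendability of continuous bilinear forms from $K$ to $\ell_1$.

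For part (1), the Lindenstrauss lifting principle in its natural extension from $L_1(\mu)$ to arbitrary $\mathcal L_1$-spaces gives $\Ext^1(K,X^*)=0$ whenever $K$ is an $\mathcal L_1$-space and $X^*$ is a dual space; hence $\Ext^2(X,X^*)=0$ at once.

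For part (2), by Theorem~1 it is enough to exhibit a continuous bilinear form on $K$ that fails to extend to $\ell_1$. The plan is to exploit the unconditional basis $(e_n)$ of $K$, with biorthogonal functionals $(e_n^*)\subset K^*$, through the family of diagonal bilinear forms $B_a(x,y)=\sum_n a_n\,e_n^*(x)\,e_n^*(y)$ indexed by $a=(a_n)\in\ell_\infty$; each is continuous by unconditionality of $(e_n)$. By Hahn--Banach, extendability of every bounded bilinear form on $K$ to $\ell_1$ is equivalent to the natural map $K\widehat{\otimes}_\pi K\to \ell_1\widehat{\otimes}_\pi\ell_1\cong \ell_1(\N\times\N)$ being an isomorphic embedding. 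Assuming such simultaneous extendability and applying it to the family $\{B_a\}_{a\in\ell_\infty}$, localizing to initial basis blocks $[e_n:n\leq N]$ should yield uniform $\ell_1^N$-factorizations of these finite-dimensional subspaces. The standard local characterization of $\mathcal L_1$-spaces would then force $K$ itself to be $\mathcal L_1$, contradicting the hypothesis.

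The main obstacle is precisely this last passage: turning the global tensor-product embedding into a uniform local $\ell_1^N$-factorization of the finite-dimensional subspaces of $K$ under the mere presence of an unconditional basis. This demands a careful choice of test matrices on $\N\times\N$, tight control of the factorization constants as $N\to\infty$, and the transfer of the resulting finite-dimensional data back into the defining conditions of an $\mathcal L_1$-space.
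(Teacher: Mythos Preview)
Your approach to part (1) is correct and essentially parallel to the paper's: where you dimension-shift and invoke the Lindenstrauss lifting principle for $\mathcal L_1$-spaces into dual spaces, the paper instead uses that $\mathcal L_1$-spaces respect the projective tensor norm, so that $\kappa(X)\widehat\otimes_\pi\kappa(X)\hookrightarrow\ell_1\widehat\otimes_\pi\ell_1$ is an isomorphic embedding, and then applies the bilinear-form criterion. These are two faces of the same fact.

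For part (2), however, you have correctly located a genuine gap and not closed it. The diagonal forms $B_a$ do not by themselves obviously yield uniform local $\ell_1^N$-factorizations of $K$; your sketch leaves precisely the hardest step as an open task, and it is not clear that bilinear data alone (as opposed to multilinear data of all orders) suffice to force $\mathcal L_1$-structure. The paper avoids this difficulty by \emph{iterating} from bilinear to $n$-linear forms. The key observation is that if $\imath\otimes\imath:K\widehat\otimes_\pi K\to\ell_1\widehat\otimes_\pi\ell_1$ is a $C$-isomorphic embedding, then the identity $\ell_1\widehat\otimes_\pi Y=\ell_1(Y)$ allows one to extend one variable at a time and conclude that $\otimes^n\imath:\widehat\otimes_\pi^{\,n}K\to\widehat\otimes_\pi^{\,n}\ell_1$ is a $C^n$-embedding for every $n$; equivalently, every $n$-linear form on $K$ extends to $\ell_1$ with constant at most $C^n$. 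At that point the paper invokes, as a black box, a theorem of Defant, Garc\'ia, Maestre and P\'erez-Garc\'ia: if a subspace $Y$ of an $\mathcal L_1$-space has an unconditional basis and every $n$-linear form on $Y$ extends with constant $C^n$, then $Y\simeq\ell_1$. This forces $K$ to be an $\mathcal L_1$-space, contradicting the hypothesis. The two ingredients your proposal is missing are therefore (i) the passage from bilinear to $n$-linear extendability via the $\ell_1$-tensor identity, and (ii) the appeal to the Defant--Garc\'ia--Maestre--P\'erez result, which already packages the delicate local argument you were attempting to build from scratch.
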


We refer the reader to \cite{deflo} and \cite{gro} for basic and thorough information about tensor products, to \cite{HS,lane} for general homological tools and to \cite{CGJ1, CGJ2} for general results on the extension bilinear forms.

\section{$\Ext$ on Banach spaces}

Recall that a short exact sequence in the category of Banach spaces is a diagram $0 \to Y \to \diamondsuit
\to X \to 0$ formed by Banach spaces and linear continuous operators such that the kernel of each arrow coincides
with the image of the preceding. The open mapping theorem guarantees that $Y$ is a subspace of $\diamondsuit$ such that the corresponding
quotient $\diamondsuit/Y$ is $X$.
The space $\diamondsuit$ itself is called a
twisted sum of $Y$ and $X$ (in that order). Two extensions $0 \to Y
\to \diamondsuit_i \to X \to 0$ ($i=1,2$) are said to be equivalent
if there exists an arrow $T$ making
commutative the diagram
$$\begin{CD}
0@>>> Y @>>> \diamondsuit_1 @>>> X @>>> 0 \\
&&@| @VTVV @|\\
0@>>> Y @>>> \diamondsuit_2 @>>> X @>>> 0\end{CD} $$
By the 3-lemma \cite{HS}, and the open mapping theorem, $T$ must be an
isomorphism. A short exact sequence is said to split if it is equivalent to
the trivial sequence $0\to Y \to Y \oplus X \to X \to 0$. Given two Banach spaces
 $Y$ and $X$ we denote by
$\Ext(X,Y)$ the set of all possible short exact sequences $0\to Y \to \diamondsuit \to X \to 0$ modulo equivalence.\medskip

Given operators $\alpha:Y\to A$ and $\beta:Y\to B$ between Banach spaces, the associated push-out diagram is
\begin{equation}\label{po-dia1}
\begin{CD}
Y@>\alpha>> A\\
@V \beta VV @VV \overline \beta V\\
B @>> \overline \alpha > \PO
\end{CD}
\end{equation}
The push-out space $\PO=\PO(\alpha,\beta)$ is the quotient of the direct sum
$A\oplus_1 B$ by the closure of the subspace $\Delta=\{(\alpha y,-\beta y): y\in Y\}$.
The map $\overline \alpha$ is the composition of the inclusion of $B$ into $A\oplus_1 B$ and the natural
quotient map $A\oplus_1 B\to (A\oplus_1 B)/\overline\Delta$, so that
$\overline \alpha(b)=(0,b)+\overline\Delta$ and, analogously, $\overline \beta(a)=(a,0)+\overline\Delta$. All this make (\ref{po-dia1}) a commutative diagram: $\overline \beta\alpha=\overline \alpha\beta$. Suppose moreover that we are given an exact sequence $0 \longrightarrow Y \stackrel{\jmath}\longrightarrow \diamondsuit \stackrel{\rho}\longrightarrow X \longrightarrow 0$
and an operator $\tau: Y\to B$. Consider the push-out $\PO$ of the couple
$(j, \tau)$. The universal
property of the push-out gives a unique operator $\overline \rho: \PO\to X$
making a commutative diagram:
\begin{equation}\label{po-seq}
\begin{CD}
0  @>>>  Y @>j>> \diamondsuit @>\rho >> X @>>>0 \\
  &  &   @V{\tau}VV  @VV{\overline \tau}V
   @| \\
0  @>>> B @>{\overline \jmath}>> \PO  @>{\overline \rho}>> X @>>>0
\end{CD}
\end{equation} As it is well known, the lower sequence in a push-out diagram
$$
\begin{CD}
0  @>>>  Y @>{\jmath}>> \diamondsuit @>>> X @>>>0 \\
  &  &   @V{\tau}VV  @VVV
   @| \\
0  @>>> B @>>> C  @>>> X@>>> 0\end{CD}$$
splits if and only if there is an operator $T: \diamondsuit\to B$ such that $T\jmath = \tau$.

The pull-back construction \index{pull-back construction} is the dual of that of push-out in the sense of categories,
that is, ``reversing arrows''.
Indeed, let $\alpha:A\to Z$ and $\beta:B\to Z$ be operators acting between Banach spaces.
The associated pull-back diagram is
\begin{equation}\label{pb-dia}
\begin{CD}
B @> \beta >> Z\\
@A{\underline \alpha}AA @AA \alpha A\\
\PB@>>\underline \beta > A\\
\end{CD}
\end{equation}
The pull-back space is $\PB=\PB(\alpha,\beta)=\{(b,a)\in B\oplus_\infty A: \beta (b)=\alpha(a) \}$.
The underlined arrows are the restriction of the projections onto the corresponding factor. Consider an exact sequence  $0\longrightarrow Y \stackrel{\jmath}\longrightarrow \diamondsuit
\stackrel{\rho}\longrightarrow X \longrightarrow 0$ and an
operator $\tau: A\to X$. The pull-back construction yields a commutative diagram:
\begin{equation}\label{pb-seq}
\begin{CD}
0  @>>>  Y @>\jmath>> \diamondsuit @>\rho>> X @>>>0 \\
  &  & @|  @A\underline \tau AA  @AA \tau A
    \\
0@>>> Y@>\underline \jmath >> \PB  @>\underline \rho >> A@>>>0\\
\end{CD}
\end{equation}Again, as it is well known, the lower sequence in a pull-back diagram
$$
\begin{CD}
0  @>>>  Y @>>> \diamondsuit @>\rho>> X @>>>0 \\
  &  &   @| @AAA  @AA{\tau}A\\
0  @>>> Y @>>> \PB  @>>> C@>>> 0
\end{CD}
$$ splits if and only if there is an operator $T: C\to \diamondsuit$ so that $\rho T=\tau$.

\section{Projective presentations of Banach spaces}

Given a Banach space $X$ there is some index set $\Gamma$ for which there is a quotient map
 $Q: \ell_1(\Gamma)\to X$. An exact sequence
$$\begin{CD} 0 @>>> \ker Q  @>>> \ell_1(\Gamma) @>>> X @>>> 0\end{CD}$$
is usually called a projective presentation of $X$. There are many non-equivalent projective presentations of a space $X$. For
instance, if $X$ is a separable Banach space, two exact sequences $0\to \ker Q \to  \ell_1 \to X\to 0$ and
$0\to  \ell_1(\Gamma) \oplus \ker Q \to \ell_1(\Gamma)\oplus \ell_1 \to X\to 0$ define, for uncountable $\Gamma$, non-equivalent projective presentations of $X$. However, all projective presentations
are ``essentially the same" in the following sense:

\begin{proposition}\label{prop:pproje} Let $X$ be a Banach space and let $\pi: \ell_1(I) \to X$ and $Q: \ell_1(J) \to X$ be two quotient maps.
Then there are isomorphisms $\alpha, \beta$ making a commutative diagram
$$\begin{CD}
0@>>> \ell_1(J) \oplus \ker \pi  @>>> \ell_1(J)\oplus \ell_1(I) @>>> X @>>>0\\
&& @V\alpha VV @V\beta VV @|\\
0@>>> \ell_1(I) \oplus \ker Q  @>>> \ell_1(I)\oplus \ell_1(J) @>>> X @>>>0
\end{CD}$$
\end{proposition}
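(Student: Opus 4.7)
The plan is to prove this as a Banach-space analogue of Schanuel's lemma, the decisive input being that every $\ell_1(\Gamma)$ is projective in the category of Banach spaces: any bounded operator from $\ell_1(\Gamma)$ into $X$ lifts along any quotient map onto $X$.

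Applying this property to $\pi$ and $Q$ I would first obtain two operators $u\colon \ell_1(I)\to \ell_1(J)$ with $Q\circ u=\pi$ and $v\colon \ell_1(J)\to \ell_1(I)$ with $\pi\circ v=Q$. These measure the failure of the two presentations to coincide literally, and they are the only ingredient where the specific structure of $\ell_1(\Gamma)$ is used.

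With $u$ and $v$ in hand, I would exhibit the middle vertical map explicitly as
$$\beta(y,x)=\bigl(x-v(y),\; y+u(x-v(y))\bigr),\qquad (y,x)\in \ell_1(J)\oplus \ell_1(I).$$
Two short checks finish this step. First, $Q\bigl(y+u(x-v(y))\bigr)=Q(y)+\pi(x)-\pi(v(y))=\pi(x)$, so the right-hand square of the target diagram commutes with the identity on $X$. Second, the formula $(a,b)\mapsto \bigl(b-u(a),\, a+v(b-u(a))\bigr)$ is a two-sided inverse for $\beta$, so $\beta$ is a Banach-space isomorphism.

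Finally, commutativity of the right-hand square forces $\beta$ to carry the kernel of $(y,x)\mapsto\pi(x)$ bijectively onto the kernel of $(a,b)\mapsto Q(b)$. Those kernels are precisely $\ell_1(J)\oplus\ker\pi$ and $\ell_1(I)\oplus\ker Q$, so the restriction of $\beta$ supplies the required isomorphism $\alpha$, and the restriction of $\beta^{-1}$ provides its inverse; commutativity of the left-hand square is then automatic from the defining formula. The only genuinely substantive step is securing the initial pair of liftings; everything else is an algebraic manipulation around the explicit formula for $\beta$.
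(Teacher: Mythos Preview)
Your argument is correct. Both your proof and the paper's are Schanuel-lemma arguments driven by the projectivity of $\ell_1(\Gamma)$, but the execution differs. The paper introduces the pullback $\ker\rho=\{(x,y)\in\ell_1(I)\oplus\ell_1(J):\pi x=Qy\}$ and uses a single lifting (a section of the projection $\ker\rho\to\ell_1(J)$, whose first component is exactly your $v$) to identify $\ker\rho$ with $\ell_1(J)\oplus\ker\pi$; by symmetry $\ker\rho\simeq\ell_1(I)\oplus\ker Q$, and $\alpha$ is the composite. You instead use \emph{both} liftings $u,v$ to write down $\beta$ directly on the ambient spaces and then restrict to kernels. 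Your route has the advantage of producing $\beta$ explicitly and verifying the full commutative diagram, which the paper leaves implicit; the paper's route is slightly more conceptual and needs only one lifting at a time. Either way the substantive step is the same projectivity input.
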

\begin{proof} Let $\{ (x,y): \pi x=Qy \}$ be the kernel of the quotient operator
$\rho: \ell_1(I) \oplus \ell_1(J) \to X$ given by $\rho(x,y)=
\pi x-Qy$. Since the projection onto the second coordinate $\pi_2: \ker
\rho \to \ell_1(J)$ is surjective, it admits a linear continuous
selection $s: \ell_1(J) \to \ker \rho$ given by $y \to (sy,y)$. We can define an isomorphism $\alpha: \ker \rho \longrightarrow
\ell_1(J) \oplus \ker \pi$ as $\alpha(x,y) = (x - sy, y).$ It is
well defined since $\pi(x-sy) = \pi x- \pi sy = \pi x-Qy=0$. It is obviously
injective since $\rho(x,y)=0$ implies $x=sy$ and $y=0$. And it is
surjective since $(k, y)$ is the image of $(k+sy,y)$. Hence $\ker \rho = \ell_1(J) \oplus \ker \pi$
and, analogously, $\ker \rho = \ell_p(I) \oplus \ker Q$.\end{proof}

In particular

\begin{corollary}\label{cor:corproje} Let $X$ be a separable Banach space different from $\ell_1$ and let $\pi, Q$ be
two quotient maps $\ell_1 \to X$. Then there are isomorphisms $\alpha, \beta$ making a commutative diagram
$$\begin{CD}
0@>>> \ker \pi  @>>> \ell_1 @>>> X @>>>0\\
&& @V\alpha VV @V\beta VV @|\\
0@>>> \ker Q  @>>> \ell_1 @>>> X @>>>0
\end{CD}$$
\end{corollary}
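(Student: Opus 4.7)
The plan is to use Proposition~\ref{prop:pproje} with $I=J=\mathbb{N}$ and then cancel the extra $\ell_1$ summands by a Pe\l{}czy\'nski-style absorption argument; the assumption $X\not\cong\ell_1$ is precisely what makes the cancellation legal.

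First I would apply Proposition~\ref{prop:pproje} with $I=J=\mathbb{N}$, which gives an isomorphism of exact sequences
$$
\begin{CD}
0 @>>> \ell_1 \oplus \ker \pi @>>> \ell_1\oplus\ell_1 @>>> X @>>>0\\
&& @VVV @VVV @|\\
0 @>>> \ell_1 \oplus \ker Q @>>> \ell_1\oplus\ell_1 @>>> X @>>>0
\end{CD}
$$
Using the linear selections $s,s'$ built in the proof of that proposition ($\pi s=Q$ and $Qs'=\pi$), the change of variables $(x,y)\mapsto(x-sy,y)$ shows that the top row is equivalent, as an extension of $X$, to the direct sum $E_\pi\oplus T$ of the original sequence $E_\pi: 0\to\ker\pi\to\ell_1\to X\to 0$ with the trivial sequence $T: 0\to\ell_1\to\ell_1\to 0\to 0$; the analogous identification applies to the bottom row. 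Hence the diagram amounts to an isomorphism of extensions $E_\pi\oplus T\cong E_Q\oplus T$.

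Next I note that, since $X$ is separable and $X\not\cong\ell_1$, both $\ker\pi$ and $\ker Q$ are infinite-dimensional (otherwise $X$ would be finite-codimensional in $\ell_1$, hence itself isomorphic to $\ell_1$). By the classical theorem of Pe\l{}czy\'nski, every infinite-dimensional closed subspace of $\ell_1$ contains a subspace isomorphic to $\ell_1$ that is complemented in the ambient $\ell_1$. The heart of the argument is then the following absorption lemma: if $E: 0\to K\to\ell_1\to X\to 0$ is exact and $K$ contains a subspace $L\cong\ell_1$ that is complemented in $\ell_1$, then $E\cong E\oplus T$ as exact sequences. To prove it, I fix a decomposition $\ell_1=L\oplus M$; since $L\subseteq K$, restricting the projection onto $L$ to $K$ gives $K=L\oplus(K\cap M)$, so $E$ splits literally as the direct sum of the trivial sequence $T_L: 0\to L\to L\to 0\to 0$ and the sequence $E': 0\to K\cap M\to M\to X\to 0$. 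Therefore
$$E\oplus T \;=\; T_L\oplus E'\oplus T \;=\; (T_L\oplus T)\oplus E' \;\cong\; T_L\oplus E' \;=\; E,$$
where the isomorphism $T_L\oplus T\cong T_L$ is produced by picking any linear isomorphism $\phi:L\oplus\ell_1\to L$ (which exists because $L\cong\ell_1$ yields $L\oplus\ell_1\cong\ell_1\oplus\ell_1\cong\ell_1\cong L$) and applying $\phi$ simultaneously on the kernel and the middle of $T_L\oplus T$.

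Applying the absorption lemma to both $E_\pi$ and $E_Q$ and concatenating with the isomorphism from the first step yields
$$E_\pi \;\cong\; E_\pi\oplus T \;\cong\; E_Q\oplus T \;\cong\; E_Q,$$
which is precisely the commutative diagram with isomorphisms $\alpha,\beta$ claimed by the corollary. The main obstacle I anticipate is bookkeeping inside the absorption lemma: one has to check that the splitting $E=T_L\oplus E'$ is a bona fide decomposition of exact sequences compatible with the inclusion $K\hookrightarrow\ell_1$ and the quotient $\pi$, and that using the same $\phi$ on kernel and middle produces a genuine isomorphism of extensions rather than just two unrelated isomorphisms of Banach spaces.
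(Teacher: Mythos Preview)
Your argument is correct and follows the same route as the paper: apply Proposition~\ref{prop:pproje} with $I=J=\mathbb{N}$ and then use the Pe\l{}czy\'nski fact that an infinite-dimensional subspace of $\ell_1$ contains a copy of $\ell_1$ complemented in the ambient $\ell_1$ to absorb the extra $\ell_1$ summand. The paper compresses this into the single line $\ker\pi\simeq\ell_1\oplus A\simeq\ell_1\oplus\ell_1\oplus A\simeq\ell_1\oplus\ker\pi$ (and the analogue for $\ker Q$), concluding only that $\ker\pi\simeq\ker Q$; your absorption lemma is the honest way to carry this out at the level of exact sequences rather than just kernels, and is precisely what the full diagram in the corollary asks for. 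The bookkeeping you flag as a potential obstacle is fine: since $L\subseteq K$, the splitting $\ell_1=L\oplus M$ restricts to $K=L\oplus(K\cap M)$ and the quotient map vanishes on $L$ and is onto from $M$, so $E=T_L\oplus E'$ is a genuine direct sum of extensions; and since in $T_L\oplus T$ the kernel and the middle coincide, applying the same isomorphism $\phi$ on both trivially gives a morphism of extensions.
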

\begin{proof} Since any infinite dimensional subspace of $\ell_1$ contains a complemented copy of
$\ell_1$, one has $\ker \pi \simeq \ell_1 \oplus A \simeq \ell_1 \oplus \ell_1 \oplus A \simeq \ell_1 \oplus
\ker \pi$ and, analogously, $\ker Q \simeq \ell_1 \oplus \ker Q$. It
follows from the proof of  Proposition \ref{prop:pproje} that
$\ker Q \simeq \ell_1 \oplus \ker Q \simeq \ell_1 \oplus \ker \pi \simeq \ker \pi.$\end{proof}

Thus, regarding the results in this paper there is no difference between considering two different projective presentations of $X$ and, with a slight abuse of notation, we will simply set $\ell_1$ (instead of $\ell_1(\Gamma)$) and $\kappa(X)$ to denote ``the" kernel of a
projective presentation. Only the results in Section 6 require separability.

\section{$\Ext^2$ on Banach spaces}

Let us operatively define a few elements of the theory of the higher order
derived functors of the functor $\mathfrak L$ in Banach spaces. Given an (equivalence class of an) exact sequence $0 \to A \to B \to C\to 0$, it will be useful to give it a short name; say $F$. We will write $F:C\lop A$ when it is necessary to specify the spaces $A$ and $C$. We will also write, when necessary, $0 \to A \to B \to C\to 0 \equiv F$. The second derived space $\Ext^2(X, Y)$ is the quotient of the vector space of concatenations $FG$ in which $G: X\lop B$ and $F: B\lop Y$ with respect to the following equivalence relation. $FG \equiv F'G'$ if and only if there is a finite sequence of elements $(F_jG_j)_{j=1,\dots, n}$ so that
$$FG \longrightarrow F_1G_1 \longleftarrow F_2G_2 \longrightarrow \cdots \longleftarrow F_nG_n \longrightarrow F'G' $$
where $FG \longrightarrow F'G'$ means the existence of a commutative diagram
$$\begin{CD}
0@>>> Y@>>> A @>>> B @>>> C @>>> X @>>> 0\\
&&\Vert&& @VVV @VVV @VVV \Vert\\
0@>>> Y@>>> A' @>>> B' @>>> C' @>>> X @>>> 0
\end{CD}$$

and $FG \longleftarrow  F'G'$ means the existence of a commutative diagram

$$\begin{CD}
0@>>> Y@>>> A @>>> B @>>> C @>>> X @>>> 0\\
&&\Vert&& @AAA @AAA @AAA \Vert\\
0@>>> Y@>>> A' @>>> B' @>>> C' @>>> X @>>> 0
\end{CD}$$

Given $0 \to Y \to A \to B \to 0 \equiv F$  and $0 \to B \to C \to X \to 0\equiv G$  the element $FG\in \Ext^2(X,Y)$ is said to be $0$ if there is a commutative diagram
$$\begin{CD}
&&0 &&0\\
&&@VVV @VVV \\
&&Y@= Y\\
&&@VVV @VVV \\
0@>>> A @>>> \diamondsuit @>>> X @>>> 0\\
&& @VVV @VVV @|\\
0@>>> B@>>> C@>>> X@>>> 0\\
&&@VVV @VVV \\
&&0&& 0
\end{CD}$$

We will write $\Ext^2(X, Y)=0$ to mean that all elements of $\Ext^2(X, Y)$ are $0$.

\section{Bilinear maps on Banach spaces}
Let $E$ be a Banach space. We denote by $\mathcal B(E, \R)$ the Banach space of all scalar bilinear continuous forms on $E$. Classical theory yields the identification

$$\mathcal B(E, \R) = \mathfrak L(E, E^*).$$

Let us denote $b \to \tau_b$ (or $b_T\leftarrow T$) this identification. Precisely, $<y,\tau_b(x)> = b(x,y)$. We rescue from \cite{CGJ1, CGJ2} the following result:

\begin{lemma} A bilinear form $b$ defined on a subspace $E$ of $\ell_1$  extends to a bilinear form $B\in \mathcal B(\ell_1, \R)$ if and only if $\tau_b$ admits an operator $T: \ell_1 \to \ell_\infty$ yielding a commutative diagram
$$\begin{CD}
E @>i>> \ell_1\\
@VV{\tau_b}V @VV{T}V \\
 E^* @<<i^*< \ell_\infty
\end{CD}$$
\end{lemma}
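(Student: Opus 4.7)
The statement has the flavor of a standard dictionary between bilinear forms and operator-valued functionals, so my plan is to unravel both directions by using the identification $\mathcal B(E,\mathbb R)=\mathfrak L(E,E^*)$ carefully and then transcribe the extension condition.

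First, I would set up notation. Write $\langle\cdot,\cdot\rangle$ for the canonical pairing between $\ell_1$ and $\ell_\infty=\ell_1^*$. The identification $B\leftrightarrow\tau_B$ for a bilinear form $B\in\mathcal B(\ell_1,\mathbb R)$ is $B(u,v)=\langle v,\tau_B(u)\rangle$, and the analogous identification holds for $b\in\mathcal B(E,\mathbb R)$ with values of $\tau_b(x)$ in $E^*$. The definition of $b$ being a \emph{restriction} of $B$ is that $b(x,y)=B(i(x),i(y))$ for all $x,y\in E$.

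For the forward implication, suppose $B\in\mathcal B(\ell_1,\mathbb R)$ extends $b$, and take $T:=\tau_B:\ell_1\to\ell_\infty$. Then for $x,y\in E$,
\[
\langle y,(i^*Ti)(x)\rangle=\langle i(y),T(i(x))\rangle=B(i(x),i(y))=b(x,y)=\langle y,\tau_b(x)\rangle,
\]
so $i^*Ti=\tau_b$, which is exactly the commutativity of the square.

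For the converse, assume we are given $T:\ell_1\to\ell_\infty$ with $i^*Ti=\tau_b$, and define $B:\ell_1\times\ell_1\to\mathbb R$ by $B(u,v):=\langle v,T(u)\rangle$. This is plainly bilinear, and it is continuous because $|B(u,v)|\le\|T\|\,\|u\|_1\|v\|_1$; that is, $B\in\mathcal B(\ell_1,\mathbb R)$. Restricting to $E\times E$ via $i$ and applying the commutativity,
\[
B(i(x),i(y))=\langle i(y),T(i(x))\rangle=\langle y,i^*T(i(x))\rangle=\langle y,\tau_b(x)\rangle=b(x,y),
\]
so $B$ extends $b$, as required.

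I do not expect any serious obstacle here; the only subtlety is to recognise that the diagram uses the adjoint $i^*:\ell_\infty\to E^*$ acting on the output side, which corresponds precisely to restricting the second variable of a bilinear form defined on $\ell_1$ to $E$, while the map $i$ on the input side restricts the first variable. Once these two restrictions are matched with the two slots of $B$, the lemma is essentially a translation between notations and the verification reduces to the two calculations above.
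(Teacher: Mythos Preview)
Your proof is correct and matches the paper's approach: the paper does not give a detailed argument but simply cites \cite{CGJ1,CGJ2} and notes that the extending form is $b_T$, which is exactly the converse direction you carry out; your computations for both implications are the standard unwinding of the identification $\mathcal B(E,\R)=\mathfrak L(E,E^*)$.
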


Of course: the bilinear form that extends $b$ is $b_T$.\medskip

We introduce now the natural equivalence relation on $\mathcal B(\kappa(X), \R)$: $B \sim 0$ if and only if $B$ extends to a bilinear form on $\ell_1$. In general, $B \sim B' \Leftrightarrow B - B'\sim 0$.

\section{$\Ext^2(X, X^*)$ as a space of bilinear forms}

\begin{proposition}\label{bil} The vector spaces $\Ext^2(X, X^*)$ and $\mathcal B(\kappa(X), \R)/\sim$ are isomorphic.
\end{proposition}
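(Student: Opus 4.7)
The plan is to obtain the desired isomorphism through two successive dimension shifts, one based on the projectivity of $\ell_1$ and the other on the injectivity of $\ell_\infty$.

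First, I would apply the long exact sequence of $\Ext^\bullet(-, X^*)$ associated to the projective presentation
$$
0 \to \kappa(X) \to \ell_1 \to X \to 0.
$$
Since $\ell_1$ is projective, $\Ext(\ell_1, X^*) = \Ext^2(\ell_1, X^*) = 0$, so the connecting morphism furnishes an isomorphism $\Ext(\kappa(X), X^*) \cong \Ext^2(X, X^*)$. Explicitly, this sends an extension $F \in \Ext(\kappa(X), X^*)$ to the class in $\Ext^2(X, X^*)$ of the concatenation $F \pi$ (which is meaningful since the right-hand term of $F$ and the left-hand term of $\pi$ both equal $\kappa(X)$).

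Next, dualize the projective presentation. Hahn--Banach gives the exactness of
$$
0 \to X^* \to \ell_\infty \to \kappa(X)^* \to 0,
$$
which plays the role of an injective resolution of $X^*$ in degree one, since $\ell_\infty$ is injective. Applying the long exact sequence of $\Ext^\bullet(\kappa(X), -)$ and using $\Ext(\kappa(X), \ell_\infty) = 0$, one obtains
$$
\Ext(\kappa(X), X^*) \cong \mathfrak{L}(\kappa(X), \kappa(X)^*)\,\big/\,\mathrm{im}\bigl(\mathfrak{L}(\kappa(X), \ell_\infty) \to \mathfrak{L}(\kappa(X), \kappa(X)^*)\bigr),
$$
where the quotient map is postcomposition with the restriction $\ell_\infty \to \kappa(X)^*$. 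The numerator is $\mathcal{B}(\kappa(X), \R)$ via the classical identification $b \leftrightarrow \tau_b$. For the denominator, an operator $\tau_b$ lies in the image iff it factors as $\kappa(X) \to \ell_\infty \to \kappa(X)^*$; using injectivity of $\ell_\infty$ to extend any $\kappa(X) \to \ell_\infty$ to $T\colon \ell_1 \to \ell_\infty$, the Lemma of Section~5 identifies this factorization condition with $b$ extending to a bilinear form on $\ell_1$, that is, with $b \sim 0$.

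The main obstacle I anticipate is setting up the two long exact sequences of $\Ext$ in the category of Banach spaces with enough topological care: the connecting morphisms should be exhibited by honest pullback or pushout constructions in the sense of Section~2, and exactness at each node must be verified using the open mapping and Hahn--Banach theorems rather than purely categorical arguments. Once this homological scaffolding is in place, the composition of the two isomorphisms delivers $\Ext^2(X, X^*) \cong \mathcal{B}(\kappa(X), \R)/\sim$. A secondary point to verify is that the resulting isomorphism is independent of the choice of projective presentation, but this is guaranteed by Proposition~\ref{prop:pproje}.
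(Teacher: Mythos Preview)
Your proposal is correct and is essentially the same argument as the paper's, just phrased in the language of long exact sequences: the paper writes any $FG$ first as $F\phi_G\Lambda_1$ via the projective presentation $\Lambda_1$ and then as $\Lambda_\infty\psi_{F\phi_G}\Lambda_1$ via the dual sequence $\Lambda_\infty$, and checks directly that $\Lambda_\infty\tau_b\Lambda_1\equiv 0$ iff $b$ extends. The ``honest pullback or pushout constructions'' you anticipate having to supply are exactly what the paper spells out in place of invoking the abstract connecting maps.
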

\begin{proof} Let us call $0 \to \kappa(X) \to \ell_1 \to X \to 0 \equiv \Lambda_1$ (resp. $0 \to X^* \to \ell_\infty \to \kappa(X)^*\to 0 \equiv \Lambda_\infty$. Every exact sequence $0 \to B \to \diamondsuit \to X \to 0 \equiv \Omega$ is a push-out $\Omega= \phi_\Omega \Lambda_1$ for some operator $\phi_\Omega: \kappa(X) \to B$; and every exact sequence $0 \to X^* \to \diamondsuit \to B \to 0 \equiv \Omega$ is a pull-back
$\Omega=\Lambda_\infty\psi_\Omega$ for some operator $\phi_\Omega: B \to \ell_\infty/X^*$.\medskip

The isomorphism between $\Ext^2(X, X^*)$ and $\mathcal B(\kappa(X), \R)/\sim$ is as follows: given  $FG\in \Ext^2(X, X^*)$, with $G: X\lop B$ and $F: B\lop X^*$, write

$$FG = F \phi_G \Lambda_1 = \Lambda_\infty \psi_{F\phi_G} \Lambda_1$$

where $\psi_{F\phi_G}: \kappa(X) \to \kappa(X)^*$ is the operator associated to a bilinear form on $\kappa(X)$. Conversely, given a bilinear form $b$ on $\kappa(X)$ with associated operator $\tau_b: \kappa(X) \to \kappa(X)^*$ we form the element $\Lambda_\infty \tau_b \Lambda_1$.\medskip

This correspondence is compatible with the equivalence relations: the commutative diagram:
$$\begin{CD}
0@>>> X^*@>>> A @>>> B @>>> C @>>> X @>>> 0 &\equiv FG\\
&&\Vert&& @AAA @AA{\psi_G}A @AAA \Vert\\
0@>>> X^*@>>> \PB @>>> \kappa(X) @>i>> \ell_1 @>q>> X  @>>> 0 &\equiv F\psi_G \Lambda_1 \\
&& \Vert&& @VVV @VV{\tau_b}V @VVV \Vert\\
0@>>> X^* @>>> \ell_\infty @>i^*>> \ell_\infty/X^* @>>> \PO @>>>
X  @>>> 0& \equiv \Lambda_\infty \tau_b \Lambda_1
\end{CD}$$
shows that $FG$ and $\Lambda_\infty \tau_b \Lambda_1
$ are the same element of $\Ext^2(X,
X^*)$. And, $\Lambda_\infty \tau_b \Lambda_1\equiv 0$ if and only if
the exact sequence $0 \to \ell_\infty/X^* \to \PO \to X\to
0 \equiv \tau_b \Lambda_1$ splits, which occurs if and only if
$\tau_b$ admits an extension to an operator $\tau: \ell_1 \to
\ell_\infty/X^*$.
Since $\ell_1$ is projective, this operator can be lifted to an operator
$T: \ell_1 \to \ell_\infty$ through the quotient map $i^*$ yielding a commutative diagram

$$\xymatrix{
\kappa(X) \ar[r]^{i} \ar[d]_{\tau_b} &\ell_1 \ar[dl]^\tau\ar[d]^{T}\\
 \ell_\infty/X^* & \ell_1^* \ar[l]^{i^*}.\\
}$$

Therefore, the bilinear form $b$ on $\kappa(X)$ extends to the bilinear form $b_T$ on $\ell_1$.\medskip

Conversely, if $b$ extends to a bilinear form $B$ on $\ell_1$ then $T= i^* \tau_B$ is an extension of $\tau_b$ and thus
$\tau_b \Lambda_1 \equiv 0$ which, in particular, implies $FG \equiv \Lambda_\infty \tau_b \Lambda_1 \equiv 0$. \end{proof}

This proves Theorem 1. A direct consequence is that we obtain a different homology sequence to define $\Ext^2$: given a projective presentation $0\to \kappa(X) \to \ell_1 \to X \to 0$ then one has an exact sequence

$$\begin{CD}
0@>>> \mathcal B(X, \R) @>>> \mathcal B(\ell_1, \R) @>>> \mathcal B(\kappa(X), \R) @>>>\Ext^2(X,X^*) @>>>0
\end{CD}$$

\section{Projective tensors}

Let  $X\widehat{\otimes}_{\pi} Y$ denote the tensor product  endowed with the projective tensor norm, so that (see \cite[3.2]{deflo})
$(X\widehat{\otimes}_{\pi} Y)^* = \mathfrak L(X, Y^*) = \mathfrak L (Y,X^*)$. It is plain that bilinear
forms defined on $\kappa(X)$ can be extended to bilinear forms on $\ell_1$  if and only if  the restriction
operator $R:  \mathfrak L(\kappa(X),\kappa(X)^*)\rightarrow \mathfrak L (\ell_1,\ell_1^*)$ is surjective, which happens if and only if
$\imath\otimes \imath$ is an into isomorphism. One thus has:

\begin{proposition}\label{Equiv} $\;$ Let $X$ be a separable Banach space and let $\imath: \kappa(X)\to \ell_1$ be the canonical inclusion. The following are equivalent
\begin{enumerate}
\item $\Ext^2(X, X^*)=0$.
\item All bilinear forms defined on $\kappa(X)$ can be extended to bilinear forms on $\ell_1$.
\item The restriction
operator $R:  \mathfrak L(\kappa(X), \kappa(X)^*)\rightarrow \mathfrak L(\ell_1, \ell_1^*)$ is surjective.
\item $ \imath \otimes \imath: \kappa(X) \widehat{\otimes}_{\pi} \kappa(X)  \longrightarrow   \ell_1\widehat{\otimes}_{\pi} \ell_1$
is an into isomorphism.
\end{enumerate}
\end{proposition}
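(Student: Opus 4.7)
The plan is to prove this four-way equivalence as a chain, with (1) $\Leftrightarrow$ (2) carrying the substantive content (Theorem 1) and (2) $\Leftrightarrow$ (3) $\Leftrightarrow$ (4) arising as formal consequences of the projective tensor duality recalled at the start of Section 7.

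For (1) $\Leftrightarrow$ (2) I would simply invoke Proposition \ref{bil}: since $\Ext^2(X, X^*) \cong \mathcal B(\kappa(X), \R)/{\sim}$, the vanishing of $\Ext^2(X, X^*)$ is by definition the statement that every bilinear form on $\kappa(X)$ extends to a bilinear form on $\ell_1$. For (2) $\Leftrightarrow$ (3) I would translate through the identification $\mathcal B(E, \R) = \mathfrak L(E, E^*)$ of Section 5: writing $b$ as $\tau_b$, an extension $B$ of $b$ to $\ell_1$ corresponds to an operator $T \in \mathfrak L(\ell_1, \ell_1^*)$ with $\imath^* T \imath = \tau_b$, so every bilinear form on $\kappa(X)$ extends if and only if the restriction operator $R$, sending $T \in \mathfrak L(\ell_1, \ell_1^*)$ to $\imath^* T \imath \in \mathfrak L(\kappa(X), \kappa(X)^*)$, is surjective.

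For (3) $\Leftrightarrow$ (4) I would invoke the projective tensor duality $(E \widehat{\otimes}_\pi F)^* \cong \mathfrak L(E, F^*)$ from \cite[3.2]{deflo}. Under this duality the restriction operator $R$ identifies with the adjoint $(\imath \otimes \imath)^*$, so the question reduces to the standard fact that an operator $S$ between Banach spaces is an into isomorphism if and only if $S^*$ is surjective. One direction uses Hahn-Banach to lift a functional on $E$ to one on $F$; the other applies the open mapping theorem to $S^*$ to produce a constant $C$ with $\|x\| \le C\|Sx\|$ via $\|x\| = \sup_{\|\varphi\|\le 1}|\varphi(x)| = \sup_{\|\varphi\|\le 1}|\psi(Sx)|$ for a preimage $\psi$ of $\varphi$ with $\|\psi\|\le C$. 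Applied to $S = \imath \otimes \imath$ this closes the chain.

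The only step requiring genuine care is the naturality check identifying $R$ with $(\imath \otimes \imath)^*$ under the tensor-product duality; this is a routine but slightly fiddly verification on elementary tensors $\imath(x) \otimes \imath(y)$. Beyond that the proof is little more than assembling the pieces, and I do not expect any serious obstacle.
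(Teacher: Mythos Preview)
Your proposal is correct and follows essentially the same route as the paper: the paper disposes of the proposition in a single paragraph preceding the statement, invoking Proposition~\ref{bil} for (1)$\Leftrightarrow$(2), the identification $\mathcal B(E,\mathbb R)=\mathfrak L(E,E^*)$ for (2)$\Leftrightarrow$(3), and the projective tensor duality $(E\widehat\otimes_\pi F)^*=\mathfrak L(E,F^*)$ together with the standard ``$S$ is an into isomorphism iff $S^*$ is surjective'' fact for (3)$\Leftrightarrow$(4). Your write-up is simply a more explicit unpacking of exactly these steps, including the naturality check that $R=(\imath\otimes\imath)^*$, which the paper leaves implicit.
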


Included in the proof are the quantitative facts: if all bilinear forms defined on $\kappa(X)$ can be extended to bilinear forms on $\ell_1$
then there is a constant $C$ so that all bilinear norm one forms can be extended to bilinear forms with norm at most $C$. Which means that $ \imath \otimes \imath: \kappa(X) \widehat{\otimes}_{\pi} \kappa(X)  \longrightarrow   \ell_1\widehat{\otimes}_{\pi} \ell_1$
is an into $C$-isomorphism, and conversely.\medskip

Recall that $\mathcal{L}_1$-spaces preserve the projective tensor norm (see \cite[3.]{deflo}), therefore $\Ext^2(X, X^*)=0$ whenever $\kappa(X)$
is an $\mathcal{L}_1$-space. This proves Theorem 2 (1).\medskip

It is well known that $\ell_1$ contains uncountably many non-isomorphic $\mathcal L_1$ spaces \cite{LP} and that $X$ does not have to be an $\mathcal L_1$ space when $\kappa(X)$ is an $\mathcal L_1$-space. There are therefore many nontrivial examples of spaces $X$ so that $\Ext(X, X^*)=0$.

\section{Unconditional bases and the extension of multilinear forms}
Throughout this section all Banach spaces will be separable. A beautiful classical result of Lusky \cite{lus} shows that whenever $X$ has a basis, $\kappa(X)$ has a basis. See \cite{castmo} for further generalizations of this result. In general, $\kappa(X)$ need not to have an unconditional basis when $X$ has an unconditional basis, as the case of $X=c_0$ shows (as it follows from \cite[Cor. 2.2]{john}); while it is not known whether $\kappa(\ell_2)$ has an unconditional basis. And this is relevant to our discussion because of the following two results:
\begin{itemize}
\item Lindenstrauss and Pelzy\'nki proved in \cite{LP} that if $X$ is an $\mathcal{L}_1$-space with unconditional basis then $X$ is isomorphic to $\ell_1$.
\item Defant et al. show in \cite{DGMP} that if $Y$ is a space with unconditional
basis that is a subspace of an $\mathcal{L}_1$-space and there is a constant $C$ such that every $n$-linear form $\tau$ on $Y$ extends to an $n$-linear form $T$ on the whole space satisfying an estimate $\|T\|\leq  C^n \|\tau\|$ then $Y = \ell_1$.
\end{itemize}

With all this we are ready to obtain our second result.

\begin{theorem}\label{dual} Let  $\kappa(X)$ be subspace of $\ell_1$ that is not an $\mathcal{L}_1$-space. If $\kappa(X)$ has  an unconditional basis, then  $\Ext^2(X, X)^*\neq0$.
\end{theorem}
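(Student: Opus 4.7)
My plan is to argue by contradiction. Assume $\Ext^2(X,X^*)=0$; by Proposition \ref{Equiv}, the embedding $\imath\otimes\imath:\kappa(X)\widehat\otimes_\pi\kappa(X)\to\ell_1\widehat\otimes_\pi\ell_1$ is an into isomorphism, and the open mapping theorem applied to the induced surjection $\mathcal{B}(\ell_1,\mathbb{R})\to\mathcal{B}(\kappa(X),\mathbb{R})$ yields a constant $C\geq 1$ such that every bilinear form on $\kappa(X)$ admits an extension to $\ell_1$ of norm at most $C$ times the original.

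The main step is to bootstrap this to the full hypothesis of the Defant--Garc\'{\i}a--Maestre--P\'erez-Garc\'{\i}a theorem quoted in the introduction, namely that every $n$-linear form on $\kappa(X)$ extends to an $n$-linear form on $\ell_1$ with norm at most $C^n\|\tau\|$. I would argue by induction on $n\geq 2$ that $\imath^{\otimes n}:\kappa(X)^{\widehat\otimes_\pi n}\to\ell_1^{\widehat\otimes_\pi n}$ is an into isomorphism with constant bounded by $C^{n-1}$, which dualizes to the desired multilinear extension. The factorization $\imath^{\otimes n}=(\imath^{\otimes(n-1)}\otimes\mathrm{id})\circ(\mathrm{id}\otimes\imath)$, together with the identifications $\ell_1\widehat\otimes_\pi Z=\ell_1(Z)$ and $\ell_1\widehat\otimes_\pi\ell_1=\ell_1$, is what makes the induction plausible: the outer factor, after these identifications, is a coordinatewise application of $\imath^{\otimes(n-1)}$, whose into-constant is $C^{n-2}$ by the inductive hypothesis, and the inner factor is a bilinear-style extension in the last slot that contributes the factor of $C$.

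Once $n$-linear forms extend with constant $C^n$, the theorem of Defant et al.\ applies: $\kappa(X)$ is a subspace of the $\mathcal{L}_1$-space $\ell_1$ with an unconditional basis and admits multilinear extension with the required exponential estimate, whence $\kappa(X)=\ell_1$, contradicting the assumption that $\kappa(X)$ is not an $\mathcal{L}_1$-space. The main obstacle is precisely the inductive bootstrap: the projective tensor product does not in general preserve into isomorphisms, so one must exploit the $\mathcal{L}_1$-nature of the ambient $\ell_1$ at each step, and making sure that the inner factor $\mathrm{id}\otimes\imath$ remains into with the correct constant is delicate, since it amounts to extending bilinear forms on $\kappa(X)$ with values in an arbitrary Banach space rather than scalars.
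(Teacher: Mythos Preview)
Your proposal follows essentially the same route as the paper: argue by contradiction, bootstrap the bilinear into-isomorphism to an $n$-fold one via the factorization through $\ell_1\widehat\otimes_\pi\kappa(X)^{\widehat\otimes(n-1)}$ and the identity $\ell_1\widehat\otimes_\pi Z=\ell_1(Z)$, then invoke the Defant--Garc\'{\i}a--Maestre--P\'erez-Garc\'{\i}a theorem to force $\kappa(X)=\ell_1$. The paper is equally brief at the very point you flag as delicate---it simply writes $\widehat\otimes^n_\pi\kappa(X)\hookrightarrow \ell_1\widehat\otimes_\pi\kappa(X)\widehat\otimes_\pi\cdots\widehat\otimes_\pi\kappa(X)$ as the first arrow without further justification---so your concern is well taken but does not distinguish your argument from the paper's own.
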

\begin{proof} As we know, if $\Ext^2(X, X^*)=0$ then $\imath \otimes \imath: \kappa(X) \widehat{\otimes}_{\pi}  \kappa(X) \hookrightarrow \ell_1 \widehat{\otimes}_{\pi} \ell_1$ is an into isomorphism. Let $C$ be its norm. Then $ \otimes^n \imath:  \widehat{\otimes}^n_{\pi} \kappa(X)  \longrightarrow   \widehat{\otimes}^n_{\pi} \ell_1$ is  an  into $C^n$-isomorphism for all $n$ as it follows from the particular properties of $\ell_1$ which make it sufficient to make extensions ``one variable at each time":
$$
\kappa(X) \widehat{\otimes}_{\pi} \kappa(X)  \hookrightarrow \ell_1 \widehat{\otimes}_{\pi} \kappa(X)=\ell_1 \left(\kappa(X) \right) \hookrightarrow \ell_1 \left(\ell_1\right) =\ell_1\widehat{\otimes}_{\pi} \ell_1$$

and then iterate the argument

$$\widehat{\otimes}^n_{\pi} \kappa(X)  \hookrightarrow \ell_1 \widehat{\otimes}_{\pi}\kappa(X)\widehat{\otimes}_{\pi} \cdots \widehat{\otimes}_{\pi}\kappa(X) =\ell_1 \left(\widehat{\otimes}^{n-1}_{\pi} \kappa(X)\right) \hookrightarrow  \ell_1\left(\widehat{\otimes}^{n-1}_{\pi}\ell_1\right) =  \widehat{\otimes}^n_{\pi}  \ell_1.$$

Thus, $n$-linear norm one forms on $\kappa(X)$ extend to $n$-linear forms on $\ell_1$ with norm at most $C^n$. If $\kappa(X)$ has unconditional basis then the result of Defant et al. in \cite{DGMP} yields that $\kappa(X)=\ell_1$, which is impossible.\end{proof}

It is in this way that the problem of whether $\Ext^2(\ell_2, \ell_2)=0$ connects with the classical unsolved problem of whether $\kappa(\ell_2)$ has an unconditional basis:

\begin{corollary} If $\Ext^2(\ell_2, \ell_2)=0$ then $\kappa(X)$ does not have an unconditional basis.
\end{corollary}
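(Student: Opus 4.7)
The approach is to prove the contrapositive: suppose $\kappa(\ell_2)$ carries an unconditional basis and derive $\Ext^2(\ell_2,\ell_2)\neq 0$. Since $\ell_2^{\,*}=\ell_2$, the natural weapon is Theorem \ref{dual} applied with $X=\ell_2$. That theorem would close the matter at once, provided one knows that $\kappa(\ell_2)$ is not an $\mathcal{L}_1$-space. So the whole content of the proof reduces to excluding the possibility that $\kappa(\ell_2)$ is an $\mathcal{L}_1$-space.

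Under the unconditional-basis hypothesis, I would first invoke the Lindenstrauss-Pe\l czy\'nski theorem recalled at the beginning of Section 8 (an $\mathcal{L}_1$-space with unconditional basis must be isomorphic to $\ell_1$). This reduces the task to ruling out $\kappa(\ell_2)\cong \ell_1$. For this I would appeal to Pe\l czy\'nski's classical result that every infinite-dimensional subspace of $\ell_1$ isomorphic to $\ell_1$ is automatically complemented in $\ell_1$. If $\kappa(\ell_2) \cong \ell_1$, the projective presentation $0\to \kappa(\ell_2)\to \ell_1\to \ell_2\to 0$ would then split, presenting $\ell_2$ as a complemented subspace of $\ell_1$. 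This contradicts the Schur property of $\ell_1$, since $\ell_2$ admits weakly null sequences that are not norm null.

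Combining the two steps, $\kappa(\ell_2)$ is not an $\mathcal{L}_1$-space while having an unconditional basis, so Theorem \ref{dual} yields $\Ext^2(\ell_2,\ell_2)\neq 0$, completing the contrapositive. The main (mild) obstacle is that the verification $\kappa(\ell_2)\not\cong \ell_1$ is not internal to the paper and relies on two external facts (Pe\l czy\'nski's complementation theorem and the Schur property of $\ell_1$); everything else is a direct specialization of Theorem \ref{dual}.
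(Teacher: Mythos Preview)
Your argument is correct and matches the paper's intended approach: the corollary is stated without proof immediately after Theorem~\ref{dual}, so it is meant as a direct specialization to $X=\ell_2$ (using $\ell_2^*=\ell_2$). The paper leaves implicit the verification that $\kappa(\ell_2)$ is not an $\mathcal{L}_1$-space; your route through Lindenstrauss--Pe\l czy\'nski, Pe\l czy\'nski's complementation theorem, and the Schur property is a clean way to make that explicit.
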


As we mentioned in the Introduction, explicit solutions to Palamodov's question in the category of Banach spaces can be easily obtained.
\begin{proposition}\label{ext2} $\Ext^2(\cdot, \ell_2)\neq 0\neq \Ext^2(\ell_2, \cdot)$.\end{proposition}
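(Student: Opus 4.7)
The plan is to exhibit explicit nontrivial 2-extensions via Yoneda concatenation, exploiting the projectivity of $\ell_1$ and the injectivity of $\ell_\infty$ to make certain connecting homomorphisms injective and thereby transport nontrivial $\Ext$-classes into $\Ext^2$.

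For $\Ext^2(\ell_2,\cdot)\neq 0$, I would apply $\Hom(-,W)$ to the projective presentation $\Lambda_1: 0\to \kappa(\ell_2)\to \ell_1\to \ell_2\to 0$. Since $\ell_1$ is projective, $\Ext^i(\ell_1,W)=0$ for $i\geq 1$, so the resulting long exact sequence gives a natural isomorphism $\Ext(\kappa(\ell_2),W)\cong \Ext^2(\ell_2,W)$ under which $\omega$ corresponds to the concatenation $\omega\Lambda_1$. I would then iterate: the projective presentation $\Lambda_2: 0\to \kappa^{(2)}\to \ell_1\to \kappa(\ell_2)\to 0$ of $\kappa(\ell_2)$ itself gives a class in $\Ext(\kappa(\ell_2),\kappa^{(2)})$, and this class is nonzero exactly when $\kappa(\ell_2)$ is not projective. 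Under the isomorphism above, $\Lambda_2\Lambda_1$ then represents a nonzero element of $\Ext^2(\ell_2,\kappa^{(2)})$.

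For $\Ext^2(\cdot,\ell_2)\neq 0$, dually I would start from $\Upsilon: 0\to \ell_2\hookrightarrow \ell_\infty \twoheadrightarrow Q\to 0$ with $Q=\ell_\infty/\ell_2$, which is nonsplit because $\ell_2$ is not complemented in $\ell_\infty$ (a classical consequence of e.g.\ the Schur/Grothendieck-property dichotomy: no bounded projection $\ell_\infty\to \ell_2$ can exist). Applying $\Hom(X,-)$ and using the injectivity of $\ell_\infty$ (so $\Ext(X,\ell_\infty)=0$), the connecting map $\Ext(X,Q)\hookrightarrow \Ext^2(X,\ell_2)$ is injective, so any nonzero $G\in \Ext(X,Q)$ yields a nonzero concatenation $\Upsilon G\in \Ext^2(X,\ell_2)$. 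Such $G$ exists as soon as $Q$ fails to be injective, witnessed for instance by embedding $Q\hookrightarrow \ell_\infty(\Gamma)$ (for $\Gamma$ of the density character of $Q$) and taking the resulting quotient extension $0\to Q\to \ell_\infty(\Gamma)\to \ell_\infty(\Gamma)/Q\to 0$.

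The crux in both parts is the pair of structural verifications: that $\kappa(\ell_2)$ is not projective (equivalently, since $\kappa(\ell_2)\subset \ell_1$ is separable, not isomorphic to $\ell_1$), and that $\ell_\infty/\ell_2$ is not injective. Each relies on a classification theorem --- projective Banach spaces are exactly the $\ell_1(\Gamma)$'s, and injective ones are the $\mathcal{L}_\infty$-spaces $C(K)$ with $K$ extremally disconnected --- together with the basic observation that $\ell_2$ is neither an $\mathcal{L}_1$- nor an $\mathcal{L}_\infty$-space. The harder of the two, I expect, is the non-projectivity of $\kappa(\ell_2)$: by the Johnson--Odell classification of $\mathcal{L}_1$-subspaces of $\ell_1$ this amounts to $\kappa(\ell_2)$ not being an $\mathcal{L}_1$-space, which in turn rests on specific local-structure (cotype/Bourgain-type) arguments distinguishing $\kappa(\ell_2)$ from $\ell_1$.
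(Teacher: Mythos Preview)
Your overall strategy---manufacture injective connecting maps from projectivity of $\ell_1$ and injectivity of $\ell_\infty$, then feed in a nonzero $\Ext$-class---is sound, but it diverges from the paper's argument and one half contains a real gap.

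The paper's proof is shorter because it uses the single deep input $\Ext(\ell_2,\ell_2)\neq 0$ (the Enflo--Lindenstrauss--Pisier / Kalton--Peck twisted Hilbert sequence $F:0\to\ell_2\to Z\to\ell_2\to 0$) and concatenates it on each side with a carefully chosen sequence. For $\Ext^2(\cdot,\ell_2)\neq 0$ one takes $G:0\to\ell_2\hookrightarrow L_1\to L_1/\ell_2\to 0$; if $FG=0$ then the defining diagram produces an extension $0\to\ell_2\to\diamondsuit\to L_1\to 0$, which splits by Lindenstrauss' lifting, and the splitting forces $F$ to split. For $\Ext^2(\ell_2,\cdot)\neq 0$ one writes $\ell_2$ as a quotient of $\ell_\infty$ and concatenates the resulting sequence with $F$; injectivity of $\ell_\infty$ plays the same role. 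No structural facts about $\kappa(\ell_2)$ or $\ell_\infty/\ell_2$ are needed.

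Your $\Ext^2(\ell_2,\cdot)$ argument is correct, but your anxiety about it is misplaced. That $\kappa(\ell_2)$ is not projective is immediate: were it projective, the presentation $0\to\kappa(\ell_2)\to\ell_1\to\ell_2\to 0$ would split and $\ell_2$ would embed (even complementably) in $\ell_1$, impossible since every infinite-dimensional subspace of $\ell_1$ contains $\ell_1$. No Johnson--Odell classification, cotype, or local-structure arguments are required; you have made the easy half look hard.

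The genuine gap is in your $\Ext^2(\cdot,\ell_2)$ argument. You correctly observe that injectivity of $\ell_\infty$ makes $\Ext(X,Q)\to\Ext^2(X,\ell_2)$ injective for $Q=\ell_\infty/\ell_2$, so you need some $X$ with $\Ext(X,Q)\neq 0$, i.e., $Q$ not injective. But your justification---citing the classification of injective spaces together with ``$\ell_2$ is not $\mathcal L_\infty$''---does not establish this: knowing that injective spaces are $C(K)$ with $K$ extremally disconnected does not by itself rule out that $\ell_\infty/\ell_2$ is such a space. Worse, since $\Ext^i(\cdot,\ell_\infty)=0$ for $i\geq 1$, the long exact sequence gives $\Ext(X,Q)\cong\Ext^2(X,\ell_2)$ for every $X$, so ``$Q$ is not injective'' is \emph{equivalent} to the assertion $\Ext^2(\cdot,\ell_2)\neq 0$ you are trying to prove. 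You have restated the problem, not reduced it. The paper's route through the twisted Hilbert sequence and Lindenstrauss' lifting is exactly the missing ingredient.
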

\begin{proof} Let $0\to \ell_2\to X \to \ell_2 \to 0$ be any nontrivial twisted sum of Hilbert spaces (see, e.g., \cite{elp, kaltpeck}). Embed $\ell_2$ into $L_1 = L_1(0,1)$ and form the element
$$\begin{CD}
0@>>> \ell_2 @>>> X @>>> \ell_2 @>>> L_1(0,1)  @>>> L_1(0,1)/\ell_2@>>>0
\end{CD}$$
It cannot be $0$ because Lindenstrauss lifting principle yields $\Ext(L_1(0,1), X)=0$ since $X$ is reflexive; thus,
if the element is $0$ then $\ell_2$ will be complemented in $X$, which is impossible. Also, if one writes $\ell_2$ as a quotient of $\ell_\infty$ and forms the element
$$\begin{CD}
0@>>> K @>>> \ell_\infty @>>> \ell_2 @>>> X @>>> \ell_2@>>>0
\end{CD}$$
this cannot be $0$ simply because $\ell_\infty$ is injective and, thus, if the element is $0$ then $\ell_2$ would be complemented in $X$.\end{proof}

We can also obtain an explicit example of $X$ so that $\Ext^2(X, X^*)\neq 0$ 

\begin{proposition}\label{ext22} If $X= \ell_1/\ell_1(\ell_2^n)$ then $\Ext^2(X, X^*)\neq 0.$\end{proposition}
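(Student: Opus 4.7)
The plan is to invoke Theorem \ref{dual} with $\kappa(X) = \ell_1(\ell_2^n)$, which is by construction the kernel of the canonical quotient $\ell_1\to X$. Two things must be checked: that $\kappa(X)$ has an unconditional basis and that it is not an $\mathcal{L}_1$-space. The first is immediate, since concatenating the standard orthonormal bases of the blocks $\ell_2^n$ yields a $1$-unconditional basis of the $\ell_1$-sum.

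For the second, the Lindenstrauss--Pe{\l}czy{\'n}ski theorem recalled just before Theorem \ref{dual} reduces the problem to showing $\ell_1(\ell_2^n)\not\simeq \ell_1$. On one hand, $\ell_1(\ell_2^n)$ contains uniformly $1$-complemented copies of $\ell_2^n$, namely the coordinate blocks. On the other hand, suppose $E\subseteq \ell_1$ is $K$-isomorphic to $\ell_2^n$ via $T:\ell_2^n\to E$ and $P:\ell_1\to E$ is a projection of norm $\lambda$. Then $I_{\ell_2^n} = (T^{-1}P)T$ factors through $\ell_1$, and since every operator $\ell_1\to \ell_2$ is $1$-summing (Grothendieck), one gets $\pi_1(I_{\ell_2^n}) \le \pi_1(T^{-1}P)\|T\| \le C K^2 \lambda$ for an absolute $C$; combined with the classical lower bound $\pi_1(I_{\ell_2^n}) \ge \sqrt{n}$, this forces $\lambda \gtrsim \sqrt{n}/K^2$. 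Hence copies of $\ell_2^n$ inside $\ell_1$ are not uniformly complemented, so $\ell_1(\ell_2^n)\not\simeq \ell_1$, and therefore $\kappa(X)$ is not an $\mathcal{L}_1$-space. Theorem \ref{dual} then yields $\Ext^2(X, X^*)\neq 0$.

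The only substantive step is the Grothendieck-based projection-constant bound; everything else is bookkeeping. As an alternative one could bypass Theorem \ref{dual} and invoke Proposition \ref{Equiv}(4) directly, by tracking the element $I_n=\sum_{i=1}^n e_i\otimes e_i\in \ell_2^n\widehat{\otimes}_\pi\ell_2^n$. Its projective tensor norm is $n$, and since $\ell_2^n$ is $1$-complemented in $\kappa(X)$ its norm inside $\kappa(X)\widehat{\otimes}_\pi\kappa(X)$ is also $n$. A discrete Khintchine computation shows that its image under $\imath\otimes \imath$ in $\ell_1\widehat{\otimes}_\pi\ell_1$ has norm only of order $\sqrt{n}$, so $\imath\otimes \imath$ is not bounded below, and Proposition \ref{Equiv} gives the result.
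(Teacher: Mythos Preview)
Your proof is correct and follows the same overall strategy as the paper: verify the hypotheses of Theorem \ref{dual} for $\kappa(X)=\ell_1(\ell_2^n)$. The only difference is in how you rule out $\kappa(X)$ being an $\mathcal{L}_1$-space. The paper passes to the dual: $(\ell_1(\ell_2^n))^*=\ell_\infty(\ell_2^n)$ contains the blocks $\ell_2^n$ uniformly complemented, and no $\mathcal{L}_\infty$-space can do this (the projection constant of $\ell_2^n$ grows like $\sqrt n$), so $\kappa(X)^*$ is not $\mathcal{L}_\infty$ and hence $\kappa(X)$ is not $\mathcal{L}_1$. You instead invoke Lindenstrauss--Pe{\l}czy{\'n}ski to reduce to $\kappa(X)\not\simeq\ell_1$ and then run the same projection-constant idea inside $\ell_1$ via Grothendieck's theorem. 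Both routes rest on the identical underlying obstruction; the paper's is a touch shorter since it skips the reduction step. (Minor slip: your estimate should read $\pi_1(I_{\ell_2^n})\le K_G\,K\,\lambda$ rather than $CK^2\lambda$, since $\|T\|\,\|T^{-1}\|\le K$; this is harmless.) Your alternative route through Proposition \ref{Equiv}(4) is a pleasant self-contained bypass of both Theorem \ref{dual} and the result from \cite{DGMP}.
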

\begin{proof} Pick a subspace $\ell_1(\ell^n_2)$ of $\ell_1(\ell_1)=\ell_1$ (a subspace $\ell_1(\ell^n_p)$ for any $1< p \leq 2$ will also work  \cite{js}). This subspace clearly has an unconditional basis ---it can even be chosen so that $\ell_1/\ell_1(\ell_2^n)$ fails to enjoy the Bounded Approximation Property; see the final example in \cite{castmo}--- and is not an $\mathcal{L}_1$-space since its dual $\ell_\infty(\ell_2^n)$ cannot be an $\mathcal L_\infty$ space because it contains $\ell_2^n$ uniformly complemented. Use now Theorem \ref{dual}.\end{proof}

\end{document}